\DeclareFontFamily{U}{wncy}{}
\DeclareFontShape{U}{wncy}{m}{n}{<->wncyr10}{}
\DeclareSymbolFont{mcy}{U}{wncy}{m}{n}
\DeclareMathSymbol{\Sha}{\mathord}{mcy}{"58} 
\def\tsc#1{\csdef{#1}{\textsc{\lowercase{#1}}\xspace}}
\newtheorem{theorem}{Theorem}
\newtheorem{lemma}[theorem]{Lemma}
\newdefinition{definition}{Definition}
\newdefinition{remark}{Remark}
\newproof{proof}{Proof}
\begin{document}
\let\WriteBookmarks\relax
\def\floatpagepagefraction{1}
\def\textpagefraction{.001}

\shorttitle{Constructing relaxation systems for lattice Boltzmann methods}    

\shortauthors{S. Simonis, M. Frank, M. J. Krause}  

\title[mode = title]{Constructing relaxation systems for lattice Boltzmann methods}



%

\author[1,2]{Stephan Simonis}[orcid=0000-0001-8555-4245]

\cormark[1]


\ead{stephan.simonis@kit.edu}


\credit{Conceptualization, 
Methodology, 
Software, 
Validation, 
Formal analysis, 
Investigation, 
Data curation, 
Writing - Original draft,  	
Visualization, 
Project administration}

\affiliation[1]{organization={Institute for Applied and Numerical Mathematics, Karlsruhe Institute of Technology},
            city={Karlsruhe},
            postcode={76131}, 
            country={Germany}}
\affiliation[2]{organization={Lattice Boltzmann Research Group, Karlsruhe Institute of Technology},
            city={Karlsruhe},
            postcode={76131}, 
            country={Germany}}

\author[1,3]{Martin Frank}




\credit{Conceptualization, 
Writing - Review {\&} Editing, 
Supervision}

\affiliation[3]{organization={Steinbuch Center for Computing, Karlsruhe Institute of Technology},
            city={Eggenstein-Leopoldshafen},
            postcode={76344}, 
            country={Germany}}

\author[1,2,4]{Mathias J. Krause}




\credit{Software, 
Resources, 
Writing - Review {\&} Editing, 
Supervision}

\affiliation[4]{organization={Institute of Mechanical Process Engineering and Mechanics, Karlsruhe Institute of Technology},
            city={Karlsruhe},
            postcode={76131}, 
            country={Germany}}

\cortext[1]{Corresponding author}



\begin{abstract}
We present the first top-down ansatz for constructing lattice Boltzmann methods (LBM) in \(d\) dimensions. 
In particular, we construct a relaxation system (RS) for a given scalar, linear, \(d\)-dimensional advection\textendash diffusion equation. Subsequently, the RS is linked to a \(d\)-dimensional discrete velocity Boltzmann model (DVBM) on the zeroth and first energy shell. 
Algebraic characterizations of the equilibrium, the moment space, and the collision operator are carried out. 
Further, a closed equation form of the RS expresses the added relaxation terms as prefactored higher order derivatives of the conserved quantity. 
Here, a generalized \((2d+1)\times (2d+1)\) RS is linked to a \(DdQ(2d+1)\) DVBM which, upon complete discretization, yields an LBM with second order accuracy in space and time. 
A rigorous convergence result for arbitrary scaling of the RS, the DVBM and conclusively also for the final LBM is proven. 
The top-down constructed LBM is numerically tested on multiple GPUs with smooth and non-smooth initial data in \(d=3\) dimensions for several grid-normalized non-dimensional numbers. 
\end{abstract}

\begin{keywords}
relaxation system \sep lattice Boltzmann methods \sep partial differential equation \sep convergence
\end{keywords}

\maketitle

\section{Introduction}

Lattice Boltzmann methods (LBM) have become a perfectly parallel alternative to conventional methods in computational fluid dynamics (CFD) and beyond \cite{lallemand2020lattice}. 
Several software realizations have been established, such as the open-source C++ framework OpenLB \cite{kummerlander2022olb15}. 
The parallel data structure enables multiphysics simulations with LBM on high-performance computing (HPC) machines \cite{krause2021openlb,
simonis2022forschungsnahe,
mink2021comprehensive,
dapelo2021lattice-boltzmann,
haussmann2021fluid-structure,
haussmann2019direct,
siodlaczek2021numerical, 
simonis2022temporal, 
bukreev2022consistent}. 
Further, OpenLB is suitable for studying the multi-dimensional stability sets of LBM itself \cite{simonis2021linear}. 

Nonetheless, the intrinsic relaxation principle of LBM stands in contrast to the direct design and analysis available for conventional top-down methods such as finite differences. 
As a consequence, the rigorous analysis of LBM is found incomplete \cite{simonis2020relaxation,bellotti2022finite}.  
As a first step towards a top-down derivation of LBM, we have proposed a constructive procedure for transforming a one-dimensional target PDE into a relaxation system (RS), which points to the typical moment system of LBM \cite{simonis2020relaxation}. 
This constructive ansatz for obtaining an LBM from a given target PDE is beneficial from various perspectives. 
First, the technique lifts the constraints of LBM in terms of guessing the moment system. 
With that, an LBM can be formulated for any PDE, which appears close enough to a balance or conservation law. 
Second, from the RS structure the correct limit towards solutions of the initial PDE can be ensured. 
Third, the added higher order derivatives, which are responsible for the bottom-up limiting property of LBM to the target PDE, are already exposed at the relaxation level that is generally valid also for other types of discretizations.

In the present work, we extend the constructive approach for LBM to \(d\) dimensions.
To the knowledge of the authors, this technique is the first top-down construction of an LBM for a given \(d\)-dimensional conservation law. 
The rest of the document is structured as follows. 
In Section~\ref{sec:meth} we introduce the target PDE, state the construction procedure, prove convergence of the relaxation system, assign specific stability parameters and, through discretization, obtain a second order LBM in space and time. 
Section~\ref{sec:numerics} discusses the numerical results and conclusions are drawn in Section~\ref{sec:conclusion}.

\section{Methodology}\label{sec:meth}

Within the construction procedure, we first transform the \(d\)-dimensional target PDE into an RS of size \((2d+1)\times (2d+1)\). 
For \(d=1\), the approach reduces to the previous one \cite{simonis2020relaxation}. 
Subsequently, we spectrally decompose the RS to obtain the transformed RS (TRS), which links to a discrete velocity Boltzmann model (DVBM). 
Discretizing the latter, we obtain a lattice Boltzmann equation (LBE) as a space-time evolution rule determining the final LBM.

\subsection{Target equation}
Let \(\rho: \Omega \times I \to \mathbb{R}, \left(\bm{x}, t\right) \mapsto \rho \left(\bm{x}, t\right) \) denote the conservative variable of the target equation (TEQ), which is an initial value problem (IVP) formed by a scalar, linear, \(d\)-dimensional advection\textendash diffusion equation (ADE)
\begin{align}\label{eq:targetADE}
\partial_{t} \rho + \bm{\nabla}_{\bm{x}} \cdot \bm{F} \left( \rho \right) - \mu \bm{\Delta}_{\bm{x}} \rho & = 0, \quad &&\text{in } \Omega \times I, \\
\rho\left( \cdot, 0\right) & \equiv \rho_{0} , \quad &&\text{in } \Omega,
\end{align}
where \(\bm{x}=\left(x, y, z\right)^{\mathrm{T}} \in \Omega \subseteq \mathbb{R}^{d}\), \( t \in I\subseteq \mathbb{R}_{0}^{+}\), \(\rho\) is periodic on \(\Omega\), \(\bm{F}\colon \mathbb{R} \to \mathbb{R}^{d}\) is linear, and \(\mu>0\) is a given diffusivity. 
Unless stated otherwise, we assume \(d=3\) and \(\bm{F} \left( \rho\right) \coloneqq \bm{u} \rho\) with a constant convection speed \(\bm{u}\in \mathbb{R}^{d}\).

\subsection{Constructing the relaxation system}
To approximate the TEQ \eqref{eq:targetADE} with LBM, we construct a generic RS via expanding the conservation law part of the PDE by perturbation terms \cite{simonis2020relaxation}. 
Let \(\tau_{\flat}\), \(a^{(1)}_{\alpha}\), \(a^{(2)}_{\alpha}\) define stability variables that need to be determined, where \(\alpha \in \{ 1, 2, \ldots, d\}\), \(\gamma > 0\), \(\delta = 2(\gamma -1)\), and \(\flat\) generalizes physical moment tensors \cite{simonis2021linear}. 
Unless stated otherwise, \(\partial_{\alpha} \coloneqq \partial / \partial x_{\alpha} \). 
Additionally, \(\cdot^{\epsilon}\) denotes a perturbed conservative variable, i.e. a quantity which solves the perturbed version of a PDE which initially is solved by \(\cdot\). 
The ansatz is based on the hyperbolic conservation law
\begin{align}\label{eq:conservationLaw}
\partial_{t} \rho + \bm{\nabla}_{\boldsymbol{x}} \cdot \boldsymbol{F} \left( \rho \right)   = 0, \quad &\text{in } \Omega \times I. 
\end{align}
To obtain an RS up to the first energy shell, two subsequent steps are performed. 
Each step consists of (i) introducing artificial variables (AV) and (ii) additional perturbation (AP) terms \cite{simonis2020relaxation}. 
In particular, for each \(\alpha\),
\begin{align}
& \underline{\text{AV}}: \phi_{\alpha} = F_{\alpha}\left(\rho\right) 
	&& \Rightarrow 
	\begin{cases} 
		\partial_{t} \rho + \sum\limits_{k=1}^{d}\partial_{k} \phi_{k} &= 0, \\
		0 &= F_{\alpha}\left(\rho\right)  - \phi_{\alpha}, 
	\end{cases} \\
& \underline{\text{AP}}: \epsilon^{\gamma} \tau_{\phi} \left( \partial_{t} \phi_{\alpha}^{\epsilon} + \frac{a_{\alpha}^{\left(1\right)}}{\epsilon^{\delta}}\partial_{\alpha} \rho^{\epsilon} \right)  = F_{\alpha}\left(\rho^{\epsilon}\right) - \phi_{\alpha}^{\epsilon} 
	&& \Rightarrow
	\begin{cases} 
		\partial_{t} \rho^{\epsilon} + \sum\limits_{k=1}^{d} \partial_{k} \phi_{k}^{\epsilon} &= 0, \\
		\partial_{t} \phi_{\alpha}^{\epsilon} + \frac{a_{\alpha}^{\left(1\right)}}{\epsilon^{\delta}}\partial_{\alpha} \rho^{\epsilon} &=  - \frac{1}{\epsilon^{\gamma} \tau_{\phi}} \left(  \phi_{\alpha}^{\epsilon} -  F_{\alpha}\left(\rho^{\epsilon} \right)  \right),
	\end{cases} \\
& \underline{\text{AV}}: \psi_{\alpha}^{\epsilon} = \frac{a_{\alpha}^{\left(1\right)}}{\epsilon^{\delta}} \rho^{\epsilon} 
	&& \Rightarrow
	\begin{cases}
		\partial_{t} \rho^{\epsilon} + \sum\limits_{k=1}^{d} \partial_{k} \phi_{k}^{\epsilon} &= 0, \\
		\partial_{t} \phi_{\alpha}^{\epsilon} + \partial_{\alpha} \psi_{\alpha}^{\epsilon} &=  - \frac{1}{\epsilon^{\gamma} \tau_{\phi}} \left( \phi_{\alpha}^{\epsilon} - F_{\alpha}\left(\rho^{\epsilon} \right)  \right), \\
		0 &=  \frac{a_{\alpha}^{\left(1\right)}}{\epsilon^{\delta}} \rho^{\epsilon} - \psi_{\alpha}^{\epsilon},
	\end{cases} \\
& \underline{\text{AP}}: \epsilon^{\gamma} \tau_{\psi} \left( \partial_{t} \psi_{\alpha}^{\epsilon\epsilon} + \frac{a_{\alpha}^{\left(2\right)}}{\epsilon^{\delta}}\partial_{\alpha} \phi_{\alpha}^{\epsilon\epsilon} \right)  = \frac{a_{\alpha}^{\left(1\right)}}{\epsilon^{\delta}} \rho^{\epsilon\epsilon} - \psi_{\alpha}^{\epsilon\epsilon} 
	&& \Rightarrow
	\begin{cases} 
		\partial_{t} \rho^{\epsilon\epsilon} + \sum\limits_{k=1}^{d} \partial_{k} \phi_{k}^{\epsilon\epsilon} &= 0, \\
		\partial_{t} \phi_{\alpha}^{\epsilon\epsilon} + \partial_{\alpha} \psi_{\alpha}^{\epsilon\epsilon}  &= - \frac{1}{\epsilon^{\gamma} \tau_{\phi}} \left(  \phi_{\alpha}^{\epsilon\epsilon} -  F_{\alpha}\left(\rho^{\epsilon\epsilon} \right)  \right), \\
		\partial_{t} \psi_{\alpha}^{\epsilon\epsilon} + \frac{a_{\alpha}^{\left(2\right)}}{\epsilon^{\delta}}\partial_{\alpha} \phi_{\alpha}^{\epsilon\epsilon} & = - \frac{1}{\epsilon^{\gamma} \tau_{\psi}} \left( \psi_{\alpha}^{\epsilon\epsilon} - \frac{a_{\alpha}^{\left(1\right)}}{\epsilon^{\delta}} \rho^{\epsilon\epsilon} \right), \label{eq:rsFull}
	\end{cases} 
\end{align}
in \(\Omega \times I\). 
Hence, we obtain a \((2d+1) \times (2d+1)\) system of equations \eqref{eq:rsFull} with relaxation terms on the right. 
\begin{definition} 
We write \eqref{eq:rsFull}, as a relaxation system 
\begin{align}\label{eq:rsClean}
\partial_{t} \boldsymbol{\rho}^{\epsilon\epsilon} + \sum\limits_{\alpha} \mathbf{A}_{\alpha} \partial_{\alpha} \boldsymbol{\rho}^{\epsilon\epsilon} = - \frac{1}{\epsilon^{\gamma}} \mathbf{S} \left[ \boldsymbol{\rho}^{\epsilon\epsilon} -  \hat{\boldsymbol{\rho}}^{\epsilon\epsilon} \right]  , 
\quad 
\mathbf{A}_{\alpha} = 
\begin{bmatrix}
0 & \boldsymbol{e}_{\alpha}^{\mathrm{T}} & \boldsymbol{0}_{1\times d} \\
\boldsymbol{0}_{d \times 1} & \boldsymbol{0}_{d\times d} & \mathrm{diag}\left(\boldsymbol{e}_{\alpha}^{\mathrm{T}}\right) \\
\boldsymbol{0}_{d\times 1} & \mathrm{diag} \left( \frac{a_{\alpha}^{(2)}}{\epsilon^{\delta}} \boldsymbol{e}_{\alpha}^{\mathrm{T}} \right) & \boldsymbol{0}_{d\times d} 
\end{bmatrix}
\in \mathbb{R}^{(2d+1) \times (2d+1)} , 
\end{align}
which governs the perturbed conservative variable \(\bm{\rho}^{\epsilon\epsilon} \coloneqq ( \rho^{\epsilon\epsilon}, 
(\boldsymbol{\phi}^{\epsilon\epsilon})^{\mathrm{T}} , (\boldsymbol{\psi}^{\epsilon\epsilon})^{\mathrm{T}} )^{\mathrm{T}} \in \mathbb{R}^{2d+1}\),
where \(\mathbf{A}_{\alpha}\) is diagonalizable by construction, \( \mathbf{S} = \mathrm{diag} ( \tau_{\rho}^{-1}, \tau_{\phi}^{-1} \mathbf{1}_{1\times d}, \tau_{\psi}^{-1} \mathbf{1}_{1\times d} ) \in \mathbb{R}^{(2d +1)\times (2d +1)} \) defines the relaxation matrix with \(\mathbf{r}_{a\times b} \in \mathbb{R}^{a\times b}\) being the all-\(r\) tensor of size \(a\times b\) for \(a,b \in \mathbb{N}\) and \(r\in \mathbb{R}\), and \(\boldsymbol{e}_{\alpha}\in\mathbb{R}^{d}\) denoting the \(\alpha\)th unit vector. 
The function \( \hat{\boldsymbol{\rho}}^{\epsilon\epsilon} \left( \rho^{\epsilon\epsilon} \right) \equiv ( 
\rho^{\epsilon\epsilon} ,
\boldsymbol{F}^{\mathrm{T}} ( \rho^{\epsilon\epsilon} ) ,
\frac{\rho^{\epsilon\epsilon}}{\epsilon^{\delta}} ( \boldsymbol{a}^{( 1 )})^{\mathrm{T}} )^{\mathrm{T}} \) is called equilibrium. 
\end{definition}
\begin{lemma}
Together with the initial condition given by \(
\boldsymbol{\rho}^{\epsilon\epsilon} \left(\cdot, 0 \right) \equiv \hat{\boldsymbol{\rho}}^{\epsilon\epsilon} \vert_{\rho^{\epsilon\epsilon}\left( \cdot , 0\right) = \rho_{0}}\) in \(\Omega\), \eqref{eq:rsClean} forms a well-posed IVP. 
\end{lemma}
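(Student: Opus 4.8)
The plan is to exploit the standing assumption \(\boldsymbol{F}(\rho)=\boldsymbol{u}\rho\) with constant \(\boldsymbol{u}\): under it both the flux and the equilibrium \(\hat{\boldsymbol{\rho}}^{\epsilon\epsilon}\) depend linearly on the conserved variable, so that \eqref{eq:rsClean} is a \emph{linear first-order system with constant coefficients}. First I would write the equilibrium as a constant matrix, \(\hat{\boldsymbol{\rho}}^{\epsilon\epsilon}=\mathbf{E}\boldsymbol{\rho}^{\epsilon\epsilon}\), whose single nonzero column is \((1,\boldsymbol{u}^{\mathrm{T}},\epsilon^{-\delta}(\boldsymbol{a}^{(1)})^{\mathrm{T}})^{\mathrm{T}}\), so that the right-hand side of \eqref{eq:rsClean} becomes a bounded linear operator \(\mathbf{C}\boldsymbol{\rho}^{\epsilon\epsilon}\) with \(\mathbf{C}=-\epsilon^{-\gamma}\mathbf{S}(\mathbf{I}-\mathbf{E})\). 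Since \(\rho\) is periodic on \(\Omega\), I would then pass to Fourier series and reduce the IVP to the decoupled family of linear ODEs \(\partial_{t}\widehat{\boldsymbol{\rho}}_{\boldsymbol{k}}=\mathbf{M}(\boldsymbol{k})\widehat{\boldsymbol{\rho}}_{\boldsymbol{k}}\) with \(\mathbf{M}(\boldsymbol{k})=-\mathrm{i}\mathbf{A}(\boldsymbol{k})+\mathbf{C}\), indexed by the dual lattice, where \(\mathbf{A}(\boldsymbol{\xi})=\sum_{\alpha}\xi_{\alpha}\mathbf{A}_{\alpha}\) is the principal symbol.

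The core step is to establish strong hyperbolicity of the principal part, i.e. that \(\mathbf{A}(\boldsymbol{\xi})\) is diagonalizable over \(\mathbb{R}\) uniformly in \(\boldsymbol{\xi}\), thereby upgrading the per-direction diagonalizability of each \(\mathbf{A}_{\alpha}\) noted in the definition to the full symbol. Writing the eigenvalue problem \(\mathbf{A}(\boldsymbol{\xi})\,(r,\boldsymbol{p}^{\mathrm{T}},\boldsymbol{q}^{\mathrm{T}})^{\mathrm{T}}=\lambda\,(r,\boldsymbol{p}^{\mathrm{T}},\boldsymbol{q}^{\mathrm{T}})^{\mathrm{T}}\) component-wise, I would use the block structure of \(\mathbf{A}_{\alpha}\) to obtain \((\lambda^{2}-\tfrac{a_{\alpha}^{(2)}}{\epsilon^{\delta}}\xi_{\alpha}^{2})p_{\alpha}=0\) for each \(\alpha\), yielding the spectrum \(\{0\}\cup\{\pm\sqrt{a_{\alpha}^{(2)}/\epsilon^{\delta}}\,|\xi_{\alpha}|\}_{\alpha=1}^{d}\). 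Provided the stability parameters satisfy \(a_{\alpha}^{(2)}>0\), these eigenvalues are real, and the explicitly computed eigenvectors extend continuously and remain linearly independent as \(\boldsymbol{\xi}\) varies, in particular across the collisions at \(\xi_{\alpha}=0\), so the eigenbasis has a condition number bounded uniformly in \(\boldsymbol{\xi}\). Equivalently, the rescaling \(\psi_{\alpha}\mapsto\sqrt{\epsilon^{\delta}/a_{\alpha}^{(2)}}\,\psi_{\alpha}\) symmetrizes each \(\phi_{\alpha}\)--\(\psi_{\alpha}\) block and exhibits a bounded symmetrizer \(\mathbf{H}\) for the hyperbolic part.

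With uniform diagonalizability in hand and \(\mathbf{C}\) bounded, I would derive the multiplier estimate \(\|\mathrm{e}^{t\mathbf{M}(\boldsymbol{k})}\|\le K\mathrm{e}^{\omega t}\) with \(K,\omega\) independent of \(\boldsymbol{k}\); summing the Fourier modes then shows that the solution operator is a \(C_{0}\)-semigroup on \(L^{2}(\Omega)\) and on each \(H^{s}(\Omega)\). Alternatively, testing \eqref{eq:rsClean} against \(\mathbf{H}\) gives an energy identity in which the advective fluxes integrate to zero by periodicity and the relaxation term is controlled by \(\|\mathbf{C}\|\), whence \(\tfrac{\mathrm{d}}{\mathrm{d}t}\|\boldsymbol{\rho}^{\epsilon\epsilon}\|_{\mathbf{H}}^{2}\le 2\omega\|\boldsymbol{\rho}^{\epsilon\epsilon}\|_{\mathbf{H}}^{2}\), and Grönwall yields uniqueness and continuous dependence. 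Because the prescribed equilibrium initial datum inherits the regularity and periodicity of \(\rho_{0}\), existence, uniqueness and stability all follow, establishing well-posedness. I expect the main obstacle to be the uniform diagonalizability in the second step: the \(\rho\)--\(\phi\) coupling in \(\mathbf{A}_{\alpha}\) is one-directional, so no diagonal symmetrizer exists and a naive bound on the eigenbasis degenerates where the eigenvalues coalesce; the resolution is the continuous extension of the eigenvectors (or the block symmetrizer above), which is exactly where the positivity \(a_{\alpha}^{(2)}>0\) enters.
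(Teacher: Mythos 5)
Your argument is correct in substance, but it takes a genuinely different route from the paper. The paper's proof is essentially a one-line reduction: it observes that the equilibrium \(\hat{\boldsymbol{\rho}}^{\epsilon\epsilon}\) depends \emph{only} on the conserved component \(\rho^{\epsilon\epsilon}\), so that \eqref{eq:rsClean} is a diagonalizable linear hyperbolic system with a Lipschitz-in-state relaxation source, and it then defers to the semilinear well-posedness theory of \cite{bouchut2000diffusive}. That route never uses linearity of \(\boldsymbol{F}\) and therefore also covers the nonlinear fluxes that the paper's subsequent remark has in view. You instead exploit the standing assumption \(\boldsymbol{F}(\rho)=\boldsymbol{u}\rho\) with constant \(\boldsymbol{u}\) to reduce to a constant-coefficient system, writing the source as \(-\epsilon^{-\gamma}\mathbf{S}(\mathbf{I}-\mathbf{E})\boldsymbol{\rho}^{\epsilon\epsilon}\) and running Fourier-multiplier and energy arguments. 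This is more elementary and fully self-contained, and it yields explicit (though necessarily \(\epsilon\)-dependent) growth bounds and a \(C_{0}\)-semigroup on \(L^{2}\) and \(H^{s}\); the price is that it does not generalize beyond the linear flux, whereas the paper's citation does.

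One remark on the step you single out as the main obstacle: the uniform diagonalizability of \(\mathbf{A}(\bm{\xi})=\sum_{\alpha}\xi_{\alpha}\mathbf{A}_{\alpha}\) is in fact immediate from material already in the paper. The single constant matrix \(\mathbf{D}\) of \eqref{eq:diagMatrixD} diagonalizes every \(\mathbf{A}_{\alpha}\) \emph{simultaneously}, hence \(\mathbf{A}(\bm{\xi})=\mathbf{D}\bigl(\sum_{\alpha}\xi_{\alpha}\mathbf{A}_{\alpha}^{\mathrm{d}}\bigr)\mathbf{D}^{-1}\) with a \(\bm{\xi}\)-independent eigenbasis and condition number, and \(\mathbf{H}=(\mathbf{D}^{-1})^{\mathrm{T}}\mathbf{D}^{-1}\) is a bounded constant symmetrizer; no continuity argument across eigenvalue collisions is needed. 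Your alternative construction via the rescaling \(\psi_{\alpha}\mapsto\sqrt{\epsilon^{\delta}/a_{\alpha}^{(2)}}\,\psi_{\alpha}\) does \emph{not} by itself produce a symmetrizer, since it leaves the one-directional \(\rho\)\textendash\(\phi\) coupling asymmetric \textemdash{} a point you concede at the end, which puts the two halves of your second paragraph in tension. Replace that claim by the \(\mathbf{D}\)-based symmetrizer and the proof closes cleanly.
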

\begin{proof}
Note that the equilibrium is solely dependent on \(\rho^{\epsilon\epsilon}\). 
The proof of a similar statement is given in \cite{bouchut2000diffusive}.
\end{proof}
\begin{theorem}
Let \(\bm{\rho}^{\epsilon\epsilon}\) be smooth in space and time. 
The RS \eqref{eq:rsFull} forms a closed equation for \(\rho^{\epsilon\epsilon}\), namely
\begin{align}
\partial_{t}\rho^{\epsilon\epsilon} 
+ \sum\limits_{\alpha} \partial_{\alpha} F_{\alpha} \left(\rho^{\epsilon\epsilon}\right) 
- \epsilon^{2-\gamma} \tau_{\phi} \sum\limits_{\alpha} a_{\alpha}^{(1)} \partial_{\alpha\alpha} \rho^{\epsilon\epsilon} 
= 
\epsilon^{\gamma}\tau_{\phi} 
\bigg[ &-
\left( 1 + \frac{\tau_{\psi}}{\tau_{\phi}}\right) \partial_{tt}\rho^{\epsilon\epsilon} 
- \frac{\tau_{\psi}}{\tau_{\phi}} \sum\limits_{\alpha} \partial_{\alpha t} F_{\alpha} \left( \rho^{\epsilon\epsilon} \right) \nonumber \\
& - \epsilon^{\gamma} \tau_{\psi} \partial_{ttt}\rho^{\epsilon\epsilon} 
+ \epsilon^{2-\gamma} \tau_{\psi} \sum\limits_{\alpha} a_{\alpha}^{(2)} \partial_{\alpha\alpha t} \rho^{\epsilon\epsilon} \bigg] . 
\end{align}
\end{theorem}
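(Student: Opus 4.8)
The plan is to eliminate the auxiliary fields \(\boldsymbol{\phi}^{\epsilon\epsilon}\) and \(\boldsymbol{\psi}^{\epsilon\epsilon}\) from \eqref{eq:rsFull} by reading the two perturbed equations as closure relations for the relaxed quantities and substituting them back into the continuity equation, using the latter repeatedly to trade flux divergences for time derivatives of \(\rho^{\epsilon\epsilon}\). Smoothness lets me commute all partial derivatives freely, and the scaling identity \(\delta = 2(\gamma-1)\), i.e. \(\epsilon^{\gamma}/\epsilon^{\delta} = \epsilon^{2-\gamma}\), is what collapses the raised prefactors into the coefficients displayed in the claim.

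First I would solve the \(\phi\)-equation for the relaxed flux, \(\phi_{\alpha}^{\epsilon\epsilon} = F_{\alpha}(\rho^{\epsilon\epsilon}) - \epsilon^{\gamma}\tau_{\phi}\left(\partial_{t}\phi_{\alpha}^{\epsilon\epsilon} + \partial_{\alpha}\psi_{\alpha}^{\epsilon\epsilon}\right)\), apply \(\partial_{\alpha}\), sum over \(\alpha\), and insert the continuity equation \(\sum_{k}\partial_{k}\phi_{k}^{\epsilon\epsilon} = -\partial_{t}\rho^{\epsilon\epsilon}\) together with its time derivative \(\sum_{k}\partial_{kt}\phi_{k}^{\epsilon\epsilon} = -\partial_{tt}\rho^{\epsilon\epsilon}\). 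This yields the exact intermediate identity
\[
\partial_{t}\rho^{\epsilon\epsilon} + \sum_{\alpha}\partial_{\alpha}F_{\alpha}(\rho^{\epsilon\epsilon}) = -\epsilon^{\gamma}\tau_{\phi}\,\partial_{tt}\rho^{\epsilon\epsilon} + \epsilon^{\gamma}\tau_{\phi}\,T, \qquad T \coloneqq \sum_{\alpha}\partial_{\alpha\alpha}\psi_{\alpha}^{\epsilon\epsilon},
\]
so that the whole task reduces to expressing \(T\) through \(\rho^{\epsilon\epsilon}\) alone. Note that rearranging this identity only reproduces itself; a genuinely closing second relation for \(T\) must come from the \(\psi\)-equation.

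The second step supplies it. Solving the \(\psi\)-equation for \(\psi_{\alpha}^{\epsilon\epsilon} = \tfrac{a_{\alpha}^{(1)}}{\epsilon^{\delta}}\rho^{\epsilon\epsilon} - \epsilon^{\gamma}\tau_{\psi}\big(\partial_{t}\psi_{\alpha}^{\epsilon\epsilon} + \tfrac{a_{\alpha}^{(2)}}{\epsilon^{\delta}}\partial_{\alpha}\phi_{\alpha}^{\epsilon\epsilon}\big)\), applying \(\partial_{\alpha\alpha}\) and summing gives a second representation of \(T\) in which \(\partial_{t}T\) reappears; I would remove \(\partial_{t}T\) by differentiating the intermediate identity once in time. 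The leading contribution \(\epsilon^{\gamma}\tau_{\phi}\tfrac{1}{\epsilon^{\delta}}\sum_{\alpha}a_{\alpha}^{(1)}\partial_{\alpha\alpha}\rho^{\epsilon\epsilon} = \epsilon^{2-\gamma}\tau_{\phi}\sum_{\alpha}a_{\alpha}^{(1)}\partial_{\alpha\alpha}\rho^{\epsilon\epsilon}\) is precisely the diffusion term moved to the left-hand side, while substituting \(\partial_{t}T\) produces exactly the \((1+\tau_{\psi}/\tau_{\phi})\,\partial_{tt}\rho^{\epsilon\epsilon}\), the mixed \(\partial_{\alpha t}F_{\alpha}\), and the \(\partial_{ttt}\rho^{\epsilon\epsilon}\) terms with the stated prefactors (here \(\epsilon^{2\gamma}\tau_{\phi}\tau_{\psi}\) and the cancellation \(2\gamma-\gamma=\gamma\) do the bookkeeping).

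The one genuinely delicate point, and what I expect to be the main obstacle, is the residual term \(-\epsilon^{2}\tau_{\phi}\tau_{\psi}\sum_{\alpha}a_{\alpha}^{(2)}\partial_{\alpha\alpha\alpha}\phi_{\alpha}^{\epsilon\epsilon}\) left over from the \(\psi\)-flux (with \(\epsilon^{2\gamma-\delta}=\epsilon^{2}\)): to recast it as the claimed \(+\epsilon^{2}\tau_{\phi}\tau_{\psi}\sum_{\alpha}a_{\alpha}^{(2)}\partial_{\alpha\alpha t}\rho^{\epsilon\epsilon}\) one must replace the single-direction divergence \(\partial_{\alpha}\phi_{\alpha}^{\epsilon\epsilon}\) by \(-\partial_{t}\rho^{\epsilon\epsilon}\). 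This is exact for \(d=1\), where the continuity equation is itself single-directional and the statement reduces to \cite{simonis2020relaxation}, but for \(d>1\) only the summed divergence \(\sum_{k}\partial_{k}\phi_{k}^{\epsilon\epsilon}\) equals \(-\partial_{t}\rho^{\epsilon\epsilon}\). I would therefore justify this last replacement through the leading-order equilibrium closure \(\phi_{\alpha}^{\epsilon\epsilon}=F_{\alpha}(\rho^{\epsilon\epsilon})+\mathcal{O}(\epsilon^{\gamma})\) inside a term that already carries the factor \(\epsilon^{2}\), so that the per-direction mismatch is pushed beyond the order retained in the closed form and is harmless in the relaxation limit. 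Making this order-counting precise, rather than the bulk of the elimination algebra, is where the real care is required.
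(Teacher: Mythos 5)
Your elimination follows the same route as the paper's proof (an inverse recursive insertion from (III)\(_\alpha\) back through (II)\(_\alpha\) into (I)), merely reorganized through the scalar \(T=\sum_\alpha\partial_{\alpha\alpha}\psi_\alpha^{\epsilon\epsilon}\) and the time-differentiated intermediate identity instead of first resolving \(\phi_\alpha^{\epsilon\epsilon}\) completely; the two orderings are algebraically identical, your exponent bookkeeping (\(\epsilon^{\gamma-\delta}=\epsilon^{2-\gamma}\), \(\epsilon^{2\gamma-\delta}=\epsilon^{2}\)) is correct, and every term of the claim except the last one falls out exactly as you describe. You have also correctly isolated the only step that is not pure algebra, namely converting \(-\epsilon^{2}\tau_{\phi}\tau_{\psi}\sum_\alpha a_\alpha^{(2)}\partial_{\alpha\alpha\alpha}\phi_\alpha^{\epsilon\epsilon}\) into \(+\epsilon^{2}\tau_{\phi}\tau_{\psi}\sum_\alpha a_\alpha^{(2)}\partial_{\alpha\alpha t}\rho^{\epsilon\epsilon}\); the paper handles this by applying \(\sum_\alpha\partial_{\alpha\alpha}a_\alpha^{(2)}\) to (I) together with the remark that \(\bm{\phi}^{\epsilon\epsilon}\) approximates the linear flux, which is precisely the \(d>1\) subtlety you flag.

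However, your proposed justification of that last step does not hold up as stated. The per-direction mismatch is
\begin{align}
\partial_{\alpha}\phi_{\alpha}^{\epsilon\epsilon}-\left(-\partial_{t}\rho^{\epsilon\epsilon}\right)=-\sum_{k\neq\alpha}\partial_{k}\phi_{k}^{\epsilon\epsilon},
\end{align}
and by the very closure \(\phi_{k}^{\epsilon\epsilon}=F_{k}(\rho^{\epsilon\epsilon})+\mathcal{O}(\epsilon^{\gamma})\) you invoke, this equals \(-\sum_{k\neq\alpha}u_{k}\partial_{k}\rho^{\epsilon\epsilon}\) up to higher order, i.e.\ it is \(\mathcal{O}(1)\), not \(\mathcal{O}(\epsilon^{\gamma})\). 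The error incurred by the replacement is therefore \(\epsilon^{2}\tau_{\phi}\tau_{\psi}\sum_{\alpha}a_{\alpha}^{(2)}\sum_{k\neq\alpha}u_{k}\partial_{\alpha\alpha k}\rho^{\epsilon\epsilon}\) plus genuinely higher-order terms \textemdash{} the same order \(\epsilon^{2}\) as the term you are rewriting, so it is \emph{not} pushed beyond the order retained in the closed form. To make the step honest you must either substitute the full divergence \(\sum_{k}\partial_{k}\phi_{k}^{\epsilon\epsilon}=-\partial_{t}\rho^{\epsilon\epsilon}\) and explicitly track the leftover cross contribution \(\sum_{\alpha}a_{\alpha}^{(2)}\sum_{k\neq\alpha}\partial_{\alpha\alpha k}\phi_{k}^{\epsilon\epsilon}\) (which vanishes identically only for \(d=1\), where the statement reduces to the earlier one-dimensional result), or state the closed equation modulo an \(\mathcal{O}(\epsilon^{2})\) remainder of exactly this form. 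Your instinct that this is where the real care is required is right; the order counting you offer does not yet supply that care.
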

\begin{proof}
Recall the generalization of Schwarz's theorem for symmetric partial derivatives of arbitrary order and the fact that the graph of \(\bm{\phi}^{\epsilon\epsilon}\) approximates the linear flux \(\bm{F}\). 
Thus we perform an inverse recursive insertion from the last artificial variable to the initial conservation law \cite{simonis2020relaxation}. 
For any \(\alpha\), let the equations of the RS \eqref{eq:rsClean} be numbered as (I), (II)\(_\alpha\), and (III)\(_\alpha\), respectively.
Solve (III)\(_\alpha\) for \(\psi_{\alpha}^{\epsilon\epsilon}\) and \(\partial_{t}\)(II)\(_\alpha\) for \(\partial_{\alpha t} \psi_{\alpha}^{\epsilon\epsilon} = \partial_{t\alpha} \psi_{\alpha}^{\epsilon\epsilon}\), and insert both into (II)\(_{\alpha}\). 
The result in turn is solved for \(\phi_{\alpha}^{\epsilon\epsilon}\) and inserted into (I). 
Finally, computing \(\partial_{t}\)(I), \(\partial_{tt}\)(II), and \(\sum_{\alpha}\partial_{\alpha\alpha} a_{\alpha}^{(2)}\)(I) allows for substituting partial derivatives of \(\phi_{\alpha}^{\epsilon\epsilon}\) with expressions in \(\rho^{\epsilon\epsilon}\) and proves the claim. 
\end{proof}
\begin{remark}
The present ansatz enables both, constructing an RS and expressing the added relaxation terms as higher order derivatives, for any conservation law akin to \eqref{eq:conservationLaw} and thus any PDE which is transformable into a similar form.
\end{remark}

\subsection{Transformed relaxation system (TRS)}
\begin{definition}
With \(\chi_{\alpha}^{\left(i\right)} \coloneqq a_{\alpha}^{\left(i\right)}/\epsilon^{\delta}\) for \(i \in \{ 1,2\}\) and any \(\alpha\), define \(\mathbf{C}^{(i)} \coloneqq \mathrm{diag} ( \chi_{\alpha}^{(i)})_{\alpha} \in \mathrm{GL}_{d}(\mathbb{R})\) and 
\begin{align} \label{eq:diagMatrixD}
\mathbf{D} \coloneqq \begin{bmatrix}
\mathbf{1}_{1 \times d} 
	& 1 
	& \mathbf{1}_{1\times d} \\
- \left( \mathbf{C}^{(2)} \right)^{\circ \frac{1}{2}} 
	& \mathbf{0}_{d \times 1} 
	& \left( \mathbf{C}^{(2)}\right)^{\circ \frac{1}{2}}  \\
\mathbf{C}^{(2)}
	& \mathbf{0}_{d \times 1}
	& \mathbf{C}^{(2)}
\end{bmatrix} \in \mathrm{GL}_{2d + 1}\left(\mathbb{R}\right) ,
 \quad
\mathbf{D}^{-1} = 
\begin{bmatrix}
\boldsymbol{0}_{d \times 1 } & - \frac{1}{2} \left( \mathbf{C}^{(2)} \right)^{\circ - \frac{1}{2}}  & \frac{1}{2} \left( \mathbf{C}^{(2)} \right)^{\circ - 1} \\
1 & \boldsymbol{0}_{1\times d} & - \left( \frac{1}{\chi_{\alpha}^{(2)}} \right)_{\alpha} \\
\boldsymbol{0}_{d \times 1} & \frac{1}{2} \left( \mathbf{C}^{(2)} \right)^{\circ - \frac{1}{2}}  & \frac{1}{2} \left( \mathbf{C}^{(2)} \right)^{\circ - 1}
\end{bmatrix} , 
\end{align}
where the \(\circ\)-exponents denote Hadamard operations \cite{reams1999hadamard}. 
For any \(\alpha\), \(\mathbf{A}_{\alpha}\) can be diagonalized with \( \mathbf{A}_{\alpha}^{\mathrm{d}} \coloneqq \mathbf{D}^{-1} \mathbf{A}_{\alpha} \mathbf{D} 
= \mathrm{diag} (
( \mathbf{C}^{(2)} )^{\circ( 1 / 2) } \bm{e}_{\alpha} , \mathbf{0}_{1\times 1} , ( \mathbf{C}^{(2)} )^{\circ (1/2)} \bm{e}_{\alpha} ) \). 
\end{definition}
\begin{definition}
Spectrally decomposing the RS \eqref{eq:rsClean}, we define the vector \(\bm{g} \coloneqq \mathbf{D}^{-1} \boldsymbol{\rho}^{\epsilon\epsilon}\) which is governed by the TRS
\begin{align}\label{eq:trs}
\partial_{t} \boldsymbol{g} + \sum\limits_{\alpha} \mathbf{A}^{\mathrm{d}}_{\alpha} \partial_{\alpha}\boldsymbol{g} = - \frac{1}{\epsilon^{\gamma}} \mathbf{D}^{-1} \mathbf{S} \mathbf{D}\left[ \boldsymbol{g} - \boldsymbol{G}\left( \boldsymbol{g} \right) \right] .
\end{align}
Here, \(\boldsymbol{G} \coloneqq \bm{\mathcal{G}} \circ \iota \circ \mathcal{D}\) such that \(\boldsymbol{G}\left(\boldsymbol{g}\right) \overset{!}{=} \mathbf{D}^{-1} \hat{\boldsymbol{\rho}}^{\epsilon\epsilon} \), 
where \(\iota:\boldsymbol{\rho}^{\epsilon\epsilon} \mapsto \rho^{\epsilon\epsilon}\) extracts the non-artificial variables, the linear map induced by \(\mathbf{D}\) is \(\mathcal{D}\colon \mathbb{R}^{2d+1} \to \mathbb{R}^{2d+1}\), and, with \(\bm{a} \coloneqq ( a_{\alpha}^{(1)} / a_{\alpha}^{(2)} )_{\alpha} \in \mathbb{R}^{d}\), the generalized Maxwellian is defined as 
\begin{align}\label{equ:generalizedMaxw}
\bm{\mathcal{G}}: 
\left[0, 1\right] \times \mathbb{R} 
\to 
\mathbb{R}^{2d+1}, 
\left(\epsilon, \eta \right) 
\mapsto \bm{\mathcal{G}} \left( \epsilon, \eta \right) = \left( \mathcal{G}_{1}, \ldots, \mathcal{G}_{2d+1}\right)^{\mathrm{T}}\left( \epsilon, \eta \right) = 
\begin{pmatrix} 
\frac{1}{2} \left[ \bm{a} \eta - \left( \mathbf{C}^{(2)}\right)^{\circ - \frac{1}{2}} \bm{F} \left( \eta\right) \right]\\
\left(1 - \mathbf{1}_{d\times 1} \cdot \bm{a}  \right) \eta  \\
\frac{1}{2} \left[ \bm{a} \eta + \left( \mathbf{C}^{(2)}\right)^{\circ - \frac{1}{2}} \bm{F} \left( \eta\right) \right] \end{pmatrix}. 
\end{align}
\end{definition}
The preceding derivation enables the algebraic characterization of the collision, the AV space, as well as the equilibrium which completely determine the relaxation limit of the generic RS and in turn the relaxation procedure of LBM. 
\begin{remark}[Collision]
The multi-relaxation-time (MRT) collision matrix \(\mathbf{K} \coloneqq  \mathbf{D}^{-1} \mathbf{S} \mathbf{D}\) is explicitely computed as 
\begin{align}\label{eq:matrix-K}
 \mathbf{K} = 
\begin{bmatrix}
\frac{1}{2}\left( \frac{1}{\tau_{\phi}} + \frac{1}{\tau_{\psi}} \right) \mathbf{I}_{d} & \boldsymbol{0}_{d \times 1} & - \frac{1}{2}\left( \frac{1}{\tau_{\phi}} - \frac{1}{\tau_{\psi}} \right) \mathbf{I}_{d} \\
\left( \frac{1}{\tau_{\rho}} - \frac{1}{\tau_{\psi}}\right) \boldsymbol{1}_{1 \times d} & \frac{1}{\tau_{\rho}} & \left( \frac{1}{\tau_{\rho}} - \frac{1}{\tau_{\psi}}\right) \boldsymbol{1}_{1\times d}\\
- \frac{1}{2}\left(  \frac{1}{\tau_{\phi}} - \frac{1}{\tau_{\psi}} \right) \mathbf{I}_{d} & \boldsymbol{0}_{d \times 1} &  \frac{1}{2}\left(\frac{1}{\tau_{\phi}} + \frac{1}{\tau_{\psi}} \right) \mathbf{I}_{d} 
\end{bmatrix} .
\end{align}
In comparison to a single-relaxation-time (SRT) collision \(\breve{\mathbf{K}}\coloneqq \mathbf{D}^{-1} \breve{\mathbf{S}} \mathbf{D} = \tau_{\rho}^{-1} \mathbf{I}_{2d+1}\), where \(\breve{\mathbf{S}} \coloneqq \tau_{\rho}^{-1} \mathbf{I}_{2d+1}\), the MRT collision carries off-diagonal entries which correlate non-equilibrium contributions via relaxation frequency sums. 
\end{remark}
\begin{remark}[AV space]
The choice of the appearance of AP and AV (\(\boldsymbol{\phi}^{\epsilon\epsilon}\) and \(\boldsymbol{\psi}^{\epsilon\epsilon}\)) within the constructive ansatz, already determines the unified diagonalizer \(\mathbf{D}\) which in turn defines the structure of the TRS via \(\mathbf{A}^{\mathrm{d}}_{\alpha}\) and \(\boldsymbol{G}\). 
In particular, 
\begin{align}\label{eq:colspD}
\mathrm{colsp} \left( \mathbf{D} \right) = \mathrm{span} \left( \bigcap\limits_{\alpha} E \left( \mathbf{A}_{\alpha} \right) \right) 
\end{align}
determines the possibilities for \(\mathbf{D}\), where \(E\left( \mathbf{A}_{\alpha}\right) \) denotes the eigenbasis of \(\mathbf{A}_{\alpha}\) consisting of right eigenvectors. 
In response of both, the choice of the AV and that the TRS is obtained through eigendecomposition of \(\mathbf{A}_{\alpha}\), we limit our discussion on orthogonal moment bases. 
\end{remark}
\subsection{Convergence result}
Let \(\Xi = \{ \eta \in \mathbb{R} : \vert \eta \vert \leq \Vert \rho_{0} \Vert_{\infty} \}\) and \(\boldsymbol{F}(0) = \mathbf{0}_{d\times 1}\).
Thus \(\forall \epsilon \in (0,1]\) it holds \(\bm{\mathcal{G}}(\epsilon, 0 ) \equiv \mathbf{0}_{(2d+1)\times 1}\) and we assume that \(\mathcal{G}_{i} (\epsilon, \cdot)\) is non-decreasing in \(\Xi\) respectively for all \(i\in \{1, 2, \ldots, 2d+1\}\). 
In \cite{simonis2020relaxation} the stability structures \cite{rheinlander2010stability} are proven to coincide with the sub-characteristics condition \cite{jin1995relaxation,bouchut2000diffusive}. 
Hence, we proceed with evaluating the latter. 
\begin{lemma}
The generalized Maxwellian \(\bm{\mathcal{G}}\) admits conditions \((M_{1}\)\textendash \(M_{4})\) in \cite{bouchut2000diffusive}. 
\end{lemma}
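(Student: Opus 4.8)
The plan is to reduce the lemma to a finite set of algebraic identities for the explicit Maxwellian $\bm{\mathcal{G}}$ of \eqref{equ:generalizedMaxw}, verifying each of the conditions $(M_1)$--$(M_4)$ of \cite{bouchut2000diffusive} in the discrete-velocity formulation induced by the TRS \eqref{eq:trs}. The relevant data are the equilibrium components $\mathcal{G}_i(\epsilon,\cdot)$ together with the characteristic speeds on the diagonal of $\mathbf{A}_{\alpha}^{\mathrm{d}}$, namely the two non-trivial blocks carrying $\mp(\mathbf{C}^{(2)})^{\circ 1/2}\bm{e}_{\alpha}$ and the stationary middle mode with speed $0$. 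First I would recast the four conditions functionally as (i) the normalization/vanishing $\bm{\mathcal{G}}(\epsilon,0)=\mathbf{0}$, (ii) monotonicity $\partial_{\eta}\mathcal{G}_i(\epsilon,\cdot)\geq 0$ on $\Xi$, and (iii)--(iv) the consistency relations for the zeroth, first and second velocity moments of $\bm{\mathcal{G}}$. The normalization is immediate: since $\bm{F}$ is linear with $\bm{F}(0)=\mathbf{0}$, every component of \eqref{equ:generalizedMaxw} vanishes at $\eta=0$, which is exactly the already recorded fact $\bm{\mathcal{G}}(\epsilon,0)\equiv\mathbf{0}$.

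For the moment relations I would exploit the $\pm$ symmetry of the two moving blocks. Summing all $2d+1$ components, the antisymmetric flux contributions $\pm\tfrac{1}{2}(\mathbf{C}^{(2)})^{\circ-1/2}\bm{F}(\eta)$ cancel pairwise, the two $\tfrac{1}{2}\bm{a}\eta$ halves combine to $(\mathbf{1}_{d\times 1}\cdot\bm{a})\eta$, and adding the middle mode $(1-\mathbf{1}_{d\times 1}\cdot\bm{a})\eta$ returns $\eta$, establishing zeroth-moment conservation. Weighting instead by the signed speeds $\pm(\chi_{\alpha}^{(2)})^{1/2}$ cancels the $\bm{a}\eta$ parts and combines the flux halves, and the factor $(\chi_{\alpha}^{(2)})^{1/2}(\chi_{\alpha}^{(2)})^{-1/2}=1$ yields precisely $F_{\alpha}(\eta)$, which is flux consistency. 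The second moment, weighted by $\chi_{\alpha}^{(2)}$, reproduces $\chi_{\alpha}^{(1)}\eta=\epsilon^{-\delta}a_{\alpha}^{(1)}\eta$; under the scaling $\delta=2(\gamma-1)$ and the relaxation time $\epsilon^{\gamma}\tau_{\phi}$ this matches the diffusion prefactor $\epsilon^{2-\gamma}\tau_{\phi}a_{\alpha}^{(1)}$ appearing in the closed equation of the preceding theorem, so the parabolic part of the TEQ is correctly recovered. All three identities are forced by the construction of $\mathbf{D}$ and are robust to the sign conventions of the decomposition.

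It remains to verify monotonicity, which for the linear flux $F_{\alpha}(\eta)=u_{\alpha}\eta$ is purely a statement about constants: differentiating \eqref{equ:generalizedMaxw} in $\eta$ turns the requirement into $\tfrac{1}{2}\bigl(a_{\alpha}\mp\epsilon^{\delta/2}(a_{\alpha}^{(2)})^{-1/2}u_{\alpha}\bigr)\geq 0$ for the moving blocks and $1-\mathbf{1}_{d\times 1}\cdot\bm{a}\geq 0$ for the middle mode. These are exactly the sub-characteristic inequalities, and they are supplied by the standing assumption that each $\mathcal{G}_i(\epsilon,\cdot)$ is non-decreasing on $\Xi$, together with the identification of the stability structures with the sub-characteristic condition from \cite{simonis2020relaxation,rheinlander2010stability}. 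I expect this to be the only genuine obstacle: unlike the moment identities, monotonicity constrains the still-free stability parameters $a_{\alpha}^{(1)},a_{\alpha}^{(2)},\tau_{\phi},\tau_{\psi}$ relative to the convection $\bm{u}$ and the target diffusivity $\mu$, and one must confirm that a consistent admissible choice exists and persists in the limit $\epsilon\to 0$. Collecting the normalization, the three moment identities, and the monotonicity bounds then yields $(M_1)$--$(M_4)$ and proves the lemma.
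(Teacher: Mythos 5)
Your overall strategy---reducing the lemma to explicit moment identities for the components of \(\bm{\mathcal{G}}\) weighted by the diagonal entries of \(\mathbf{A}^{\mathrm{d}}_{\alpha}\)---is the same direct computation the paper performs, and your verifications of the zeroth moment (\(M_1\)) and of the flux consistency (\(M_2\)) via the \(\pm\) cancellation of the two moving blocks are correct. However, you have mis-identified what the four conditions of \cite{bouchut2000diffusive} actually are, and this produces a genuine gap at the one place where the proof has real content. The condition \((M_3)\) is not a structural identity ``forced by the construction of \(\mathbf{D}\)'': it demands that the rescaled second moment \(\sum_{i}\bigl[\sqrt{\epsilon^{\delta}}(\mathbf{A}^{\mathrm{d}}_{\alpha})_{i,i}\bigr]\bigl[\sqrt{\epsilon^{\delta}}(\mathbf{A}^{\mathrm{d}}_{\beta})_{i,i}\bigr]\mathcal{G}_{i}(0,\eta)\) equal \(\mu\eta\,\delta_{\alpha,\beta}\), i.e.\ the \emph{target} diffusivity. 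You correctly compute that this moment equals \(a^{(1)}_{\alpha}\eta\) (after absorbing the \(\epsilon^{\delta}\) factors), but you then compare it to the diffusion prefactor of the closed equation rather than to \(\mu\), and so you never extract the conclusion \(a^{(1)}_{\alpha}=\mu\) for all \(\alpha\). That constraint is the substantive output of the lemma's proof (it is what later feeds the relaxation-stability definition and Lemma~\ref{prop:convergence_RS}); without it the lemma is only verified conditionally on an unpinned parameter.

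Two further mismatches with the cited conditions: \((M_4)\) is the uniform convergence \(\mathcal{G}_{i}(\epsilon,\cdot)\to\mathcal{G}_{i}(0,\cdot)\) as \(\epsilon\searrow 0\) on \(\Xi\), which you do not address (your closing remark about a choice that ``persists in the limit'' is not a verification; here it follows easily since the \(\epsilon\)-dependence enters only through \(\bm{a}\) and \((\mathbf{C}^{(2)})^{\circ-1/2}\bm{F}\), which combine to \(\epsilon\)-independent expressions once \(\chi^{(i)}_{\alpha}=a^{(i)}_{\alpha}/\epsilon^{\delta}\) is inserted, but it must be stated). Conversely, the monotonicity \(\partial_{\eta}\mathcal{G}_{i}\geq 0\) and the normalization \(\bm{\mathcal{G}}(\epsilon,0)=\mathbf{0}\), which you promote to members of \((M_1)\)--\((M_4)\) and call ``the only genuine obstacle,'' are in this paper standing assumptions recorded \emph{before} the lemma and discharged afterwards by the relaxation-stability conditions \eqref{equ:stability_nondecreasing}; treating them as the content of the lemma shifts the burden away from \((M_3)\), which is where the actual parameter identification happens.
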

\begin{proof}
Unless stated otherwise, let \(i=1, \ldots, 2d+1\). 
Some algebra verifies 
\begin{align}
& ( M_{1} ) \quad 
\sum\limits_{i} \mathcal{G}_{i}\left(\epsilon, \eta \right) = \eta \quad 
&& \forall \epsilon \in \left(0, 1\right] ~\forall \eta \in \Xi,  \label{equ:bouchutProofM1}\\
& ( M_{2} ) \quad 
\sum\limits_{i} \left( \mathbf{A}^{\mathrm{d}}_{\alpha} \right)_{i,i} \mathcal{G}_{i}\left(\epsilon, \eta \right) = F_{\alpha}\left( \eta \right) 
&& \forall \alpha \in \{1,2,\ldots, d\} ~\forall \epsilon \in \left(0, 1\right]~ \forall \eta \in \Xi,   \\
& ( M_{3} ) \quad 
\sum\limits_{i} \left[ \sqrt{\epsilon^{\delta}} \left( \mathbf{A}^{\mathrm{d}}_{\alpha} \right)_{i,i} \right] \Bigl[ \sqrt{\epsilon^{\delta}} \bigl( \mathbf{A}^{\mathrm{d}}_{\beta} \bigr)_{i,i} \Bigr]  \mathcal{G}_{i}\left(0, \eta\right) = \mu \eta \delta_{\alpha,\beta}
&&\forall \alpha, \beta \in \{1,2,\ldots, d\} ~\forall \eta \in \Xi,   \\
& ( M_{4} ) \quad 
\lim\limits_{\epsilon \searrow 0} \mathcal{G}_{i}\left(\epsilon, \eta\right) = \mathcal{G}_{i} \left( 0, \eta \right)  
&& \text{uniformly for } \eta \in \Xi, 
\end{align}
where \((M_{3})\) requires that \(\forall \alpha \colon a^{(1)}_{\alpha} = \mu \). 
\end{proof}

\begin{definition}
The TRS is termed relaxation-stable if the stability constants are chosen such that \(\forall \alpha\colon  a^{(1)}_{\alpha} = \mu\) and 
\begin{align}\label{equ:stability_nondecreasing}
a^{(2)}_{\alpha} \geq a^{(1)}_{\alpha} 
\quad \wedge \quad 
\frac{a^{(1)}_{\alpha}}{\sqrt{\epsilon^{\delta} a^{(2)}_{\alpha}}} \geq \vert F^{\prime}_{\alpha} \left( \rho \right)\vert. 
\end{align}
\end{definition}

\begin{lemma} \label{prop:convergence_RS}
Let \(\tau_{\flat} = 1\) \(\forall \flat \in \{\rho, \phi, \psi\}\) and \(\rho_{0} \in L^{\infty}\left( \Omega \right) \cap L^{1}\left( \Omega \right) \). 
Initialize the TRS \eqref{eq:trs} with \(\bm{g}\left(\cdot, 0\right) = \bm{\mathcal{G}}\left(\epsilon, \rho_{0}\right) \) and specify the stability constants such that the TRS is relaxation-stable. 
Then 
\begin{align}
\lim\limits_{\epsilon \searrow 0 }  \rho^{\epsilon\epsilon} =  \rho_{\star} \in C ( I; L_{\mathrm{loc}}^{1} \left( \Omega \right) ) \cap L^{\infty}\left( \Omega  \times I \right)
\end{align}
is the unique solution to the TEQ \eqref{eq:targetADE}. 
\end{lemma}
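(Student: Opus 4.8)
The plan is to recognize \eqref{eq:trs}, under the hypotheses of the statement, as a \emph{diffusive BGK approximation} in the sense of Bouchut, Guarguaglini and Natalini \cite{bouchut2000diffusive}, and to apply their convergence theorem once its hypotheses are matched to the present construction. First I would exploit \(\tau_{\flat}=1\): by \eqref{eq:matrix-K} the MRT collision \(\mathbf{K}=\mathbf{D}^{-1}\mathbf{S}\mathbf{D}\) collapses to the SRT form \(\mathbf{K}=\mathbf{I}_{2d+1}\), so that the TRS reduces to \(\partial_{t}\boldsymbol{g}+\sum_{\alpha}\mathbf{A}^{\mathrm{d}}_{\alpha}\partial_{\alpha}\boldsymbol{g}=-\epsilon^{-\gamma}[\boldsymbol{g}-\boldsymbol{G}(\boldsymbol{g})]\), a genuine relaxation toward the equilibrium \(\boldsymbol{G}=\bm{\mathcal{G}}\circ\iota\circ\mathcal{D}\). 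The diagonal velocities \((\mathbf{A}^{\mathrm{d}}_{\alpha})_{i,i}\) scale like \(\epsilon^{-\delta/2}\) while the relaxation rate scales like \(\epsilon^{-\gamma}\); with \(\delta=2(\gamma-1)\) the rescaled velocities \(\sqrt{\epsilon^{\delta}}(\mathbf{A}^{\mathrm{d}}_{\alpha})_{i,i}\) are \(O(1)\), which is precisely the diffusive scaling required in \cite{bouchut2000diffusive}. The structural conditions \((M_{1})\)\textendash\((M_{4})\) are already established in the preceding lemma, and \((M_{3})\) (with \(a^{(1)}_{\alpha}=\mu\)) encodes that the second rescaled moment of the equilibrium reproduces the diffusion tensor \(\mu\mathbf{I}_{d}\).

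Next I would secure the uniform\textendash in\textendash\(\epsilon\) a priori estimates that drive compactness. Relaxation-stability \eqref{equ:stability_nondecreasing} supplies the sub-characteristic condition \cite{jin1995relaxation} together with the monotonicity of each component \(\mathcal{G}_{i}(\epsilon,\cdot)\); this renders the BGK operator order-preserving and yields a kinetic maximum principle, hence the uniform bound \(\Vert\rho^{\epsilon\epsilon}\Vert_{L^{\infty}(\Omega\times I)}\le\Vert\rho_{0}\Vert_{\infty}\), an \(L^{1}\)-contraction inherited from \(\rho_{0}\in L^{1}(\Omega)\), and a uniform estimate on the spatial total variation together with time equicontinuity. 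Combined with the kinetic dissipation that controls the non-equilibrium remainder \(\boldsymbol{g}-\boldsymbol{G}(\boldsymbol{g})=O(\epsilon^{\gamma})\), these estimates give relative compactness of \((\rho^{\epsilon\epsilon})_{\epsilon}\) in \(C(I;L^{1}_{\mathrm{loc}}(\Omega))\). Extracting a limit \(\rho_{\star}\) and passing to the limit in the weak formulation, conditions \((M_{1})\) and \((M_{2})\) identify the zeroth and first moments as \(\rho_{\star}\) and the flux \(\boldsymbol{F}(\rho_{\star})\), while \((M_{3})\)\textendash\((M_{4})\) produce the parabolic term \(-\mu\bm{\Delta}_{\bm{x}}\rho_{\star}\); thus \(\rho_{\star}\) is a weak solution of the TEQ \eqref{eq:targetADE}.

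Finally, since \eqref{eq:targetADE} is linear and uniformly parabolic (\(\mu>0\)), it admits a unique solution in \(C(I;L^{1}_{\mathrm{loc}}(\Omega))\cap L^{\infty}(\Omega\times I)\) for the given data; this both identifies \(\rho_{\star}\) and upgrades the subsequential convergence to convergence of the full family as \(\epsilon\searrow0\). I expect the main obstacle to lie in the singular diffusive limit itself: the transport speeds blow up like \(\epsilon^{-\delta/2}\) while the collision is stiff of order \(\epsilon^{-\gamma}\), so showing rigorously (rather than through a formal Chapman\textendash Enskog expansion) that their balance yields exactly \(\mu\bm{\Delta}_{\bm{x}}\rho_{\star}\) requires uniform control of the order-\(\epsilon^{\gamma}\) non-equilibrium part \emph{and} of its spatial gradient. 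Verifying that the explicit \(\bm{\mathcal{G}}\), \(\mathbf{A}^{\mathrm{d}}_{\alpha}\) and \(\mathbf{K}=\mathbf{I}_{2d+1}\) satisfy the monotonicity and dissipation hypotheses underpinning these estimates is the technical heart of the argument, after which the theorem of \cite{bouchut2000diffusive} applies.
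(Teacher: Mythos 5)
Your proposal is correct and follows essentially the same route as the paper: the paper's proof is a one-line citation to \cite[Corollary 3.1]{simonis2020relaxation}, which itself rests on the diffusive BGK convergence theorem of \cite{bouchut2000diffusive} whose hypotheses \((M_{1})\)\textendash\((M_{4})\) and monotonicity/sub-characteristic conditions are exactly what the preceding lemma and the relaxation-stability definition supply. You have simply unpacked the content of that cited corollary rather than invoking it.
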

\begin{proof}
The claim follows from \cite[Corollary 3.1]{simonis2020relaxation}. 
\end{proof}

\begin{remark}
With Lemma~\ref{prop:convergence_RS}, we have also verified structural stability of the TRS, in case of \(\tau_{\flat} = 1\). 
For LBM, the conditions \eqref{equ:stability_nondecreasing} represent a positivity-preserving bound and a linear stability criteria of the equilibrium distribution, respectively. 
The LBM-counterpart of \eqref{equ:stability_nondecreasing} is sufficient for stability under the premise of uniform relaxation \cite{hosseini2017stability}. 
\end{remark}

\subsection{Discrete velocity Boltzmann model}
We adapt the notation of \cite{simonis2020relaxation}. 
A description of the \(D3Q7\) discrete velocity stencil is given for example in \cite{siodlaczek2021numerical}.
\begin{theorem}
Let \(\tau = 1\), \(\gamma = 2\), and \(\mathbf{S}_{k} \coloneqq \mathrm{diag} ( \mathsf{s}_{k})\) with the component maps \(\mathsf{s}_{k} \coloneqq  ( (\bm{c}_{j} )_{k} )_{0\leq j < q}^{\mathrm{T}}\).
The \(D3Q7\) DVBE 
\begin{align} \label{equ:3x3dvbe}
\partial_{t} 
\bm{\mathsf{f}} + 
\sum\limits_{k=1}^{d} \mathbf{S}_{k} 
\partial_{k} 
\bm{\mathsf{f}} 
= 
- \frac{1}{\epsilon^{\gamma}\tau} 
\left( \bm{\mathsf{f}} - \bm{\mathsf{f}}^{\mathrm{eq}} \left( \bm{\mathsf{f}} \right) \right) ,
\end{align}
with initial condition \(\bm{\mathsf{f}}\left(\bm{x}, 0\right) = \bm{\mathsf{f}}^{\mathrm{eq}} \left( \bm{\mathsf{f}} \right) \coloneqq \varepsilon ( \rho_{0}; \bm{\mathsf{s}} )\) converges in \(C ( I; L_{loc}^{1} \left(\Omega \right) ) \) to the unique solution of the IVP \eqref{eq:targetADE}. 
\end{theorem}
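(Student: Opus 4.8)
The plan is to recognize the $D3Q7$ DVBE \eqref{equ:3x3dvbe} as a mere relabeling of the transformed relaxation system \eqref{eq:trs} and then to invoke Lemma~\ref{prop:convergence_RS} without modification. First I would set $q = 2d+1 = 7$ and identify the population vector $\bm{\mathsf{f}}$ with the spectral variable $\bm{g} = \mathbf{D}^{-1}\bm{\rho}^{\epsilon\epsilon}$, reordering the discrete velocities so that the rest population occupies the central slot associated with the zero eigenvalue shared by all $\mathbf{A}_\alpha$. Because the first row of $\mathbf{D}$ is $(\mathbf{1}_{1\times d}, 1, \mathbf{1}_{1\times d})$, the density is recovered identically in both descriptions, $\rho^{\epsilon\epsilon} = \sum_i g_i = \sum_j \mathsf{f}_j$; hence the limit $\rho_\star$ delivered by Lemma~\ref{prop:convergence_RS} is exactly the conserved moment of the DVBE.

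Second, I would match the streaming and collision operators of \eqref{equ:3x3dvbe} and \eqref{eq:trs} term by term. The streaming parts coincide once the $D3Q7$ velocities are scaled so that $(\bm{c}_j)_\alpha \in \{0, \pm\sqrt{\chi_{\alpha}^{(2)}}\}$, since then $\mathbf{S}_k = \mathrm{diag}(\mathsf{s}_k)$ reproduces the diagonal operator $\mathbf{A}^{\mathrm{d}}_k$, whose only nonzero entries $\pm\sqrt{\chi_{k}^{(2)}}$ sit in the two slots belonging to direction $k$. The collision parts coincide because $\tau = 1$ forces $\tau_\flat = 1$ for all $\flat \in \{\rho,\phi,\psi\}$, so $\mathbf{S} = \mathbf{I}_{2d+1}$ and the MRT matrix \eqref{eq:matrix-K} collapses to $\mathbf{K} = \mathbf{D}^{-1}\mathbf{I}\mathbf{D} = \mathbf{I}_{2d+1}$, which is exactly the single-relaxation-time form $-\epsilon^{-\gamma}\tau^{-1}(\bm{\mathsf{f}} - \bm{\mathsf{f}}^{\mathrm{eq}})$.

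Third, I would verify that the equilibria agree, $\bm{\mathsf{f}}^{\mathrm{eq}} = \varepsilon(\rho;\bm{\mathsf{s}}) = \bm{G}(\bm{g}) = \mathbf{D}^{-1}\hat{\bm{\rho}}^{\epsilon\epsilon}$, i.e. the discrete Maxwellian equals the generalized Maxwellian \eqref{equ:generalizedMaxw}. The prescribed datum $\bm{\mathsf{f}}(\cdot,0) = \varepsilon(\rho_0;\bm{\mathsf{s}})$ then reads $\bm{g}(\cdot,0) = \bm{\mathcal{G}}(\epsilon,\rho_0)$, exactly the initialization required by Lemma~\ref{prop:convergence_RS}. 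With this, the DVBE and the TRS are one and the same initial value problem, and the claimed convergence of the density in $C(I;L^1_{\mathrm{loc}}(\Omega))$ to the unique solution of \eqref{eq:targetADE} follows immediately, provided the constants are relaxation-stable, i.e. $a^{(1)}_\alpha = \mu$ and \eqref{equ:stability_nondecreasing}. The choice $\gamma = 2$, hence $\delta = 2$, keeps $\sqrt{\epsilon^\delta}(\mathbf{A}^{\mathrm{d}}_\alpha)_{i,i} = \pm\sqrt{a^{(2)}_\alpha}$ finite and $\epsilon$-independent, which is what renders conditions $(M_3)$ and $(M_4)$ meaningful in the diffusive limit.

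The hard part will be the equilibrium identification together with the simultaneous diagonalization. One must confirm that the single orthogonal stencil $D3Q7$ realizes $\mathbf{A}^{\mathrm{d}}_\alpha$ for every $\alpha$ at once, equivalently that $\mathrm{colsp}(\mathbf{D}) = \mathrm{span}(\bigcap_\alpha E(\mathbf{A}_\alpha))$ as in \eqref{eq:colspD}, and that the discrete moments of $\varepsilon(\rho;\bm{\mathsf{s}})$ reproduce the components of \eqref{equ:generalizedMaxw}, in particular the linear flux on the first shell and the conserved value on the zeroth shell. This moment-matching is precisely where the stencil weights and the constants $a^{(1)}_\alpha = \mu$, $a^{(2)}_\alpha$ get pinned down so that $(M_1)$--$(M_4)$ hold; the remainder is bookkeeping.
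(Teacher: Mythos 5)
Your proposal is correct and follows essentially the same route as the paper: the paper's proof likewise identifies the $D3Q7$ DVBE with the TRS by assigning the stability parameters (there written as $a_\alpha^{(1)}=\lambda^2/\theta$, $a_\alpha^{(2)}=\lambda^2$, $\tau_\flat=\tau=1$) and setting $\varepsilon(\cdot;\bm{\mathsf{s}})\equiv\bm{\mathcal{G}}(\epsilon,\cdot)$, after which Lemma~\ref{prop:convergence_RS} gives the convergence. You simply spell out the bookkeeping (velocity scaling $\pm\sqrt{\chi_\alpha^{(2)}}$, collapse of the MRT matrix to SRT, matching of initial data) that the paper leaves implicit.
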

\begin{proof}
We link the TRS \eqref{eq:trs} to the \(D3Q7\) DVBE via assigning the stability parameters. 
Given the constants \(\lambda, \theta > 0 \), set \(\tau_{\flat} \coloneqq \tau\), \(a_{\alpha}^{(1)} \coloneqq \lambda^{2}/\theta\), and \(a_{\alpha}^{(2)} \coloneqq \lambda^{2}\) \(\forall \alpha\). 
Thus, with \(\varepsilon \left( \cdot ; \bm{\mathsf{s}}\right) \equiv \bm{\mathcal{G}} \left(\epsilon, \cdot\right)\) and \(\tau = 1\), Lemma~\ref{prop:convergence_RS} implies convergence.
\end{proof}

\subsection{Lattice Boltzmann method}
Using the limit consistent second order discretization of \eqref{equ:3x3dvbe} with a Crank\textendash Nicolson-type method \cite{simonis2022limit}, we obtain an LBE evolving the populations \(f_{i}\) in space-time on \(D3Q7\) with SRT collision 
\begin{align} 
f_{i}\left( \bm{x} + \triangle t \bm{c}_{i}, t + \triangle t \right) = f_{i} \left( \bm{x},  t \right) - \frac{\triangle t}{\tau - \frac{\triangle t}{2} } \left[f_{i} \left( \bm{x}, t \right) - f_{i}^{\mathrm{eq}} \left( \bm{x}, t \right) \right], \quad \text{for } i=0, 1 \ldots, 6 . 
\end{align}
Following \cite{simonis2022limit}, the embedded limit yields convergence to the solution of the target IVP \eqref{eq:targetADE} up to a truncation error of \(\mathcal{O} \left( \triangle x^{2} \right)\) in diffusive scaling \(\gamma = 2 \) and \(\epsilon \mapsfrom \triangle t \sim \triangle x^{2} \), where \(\mu = c_{s}^{2} ( \tau - \triangle t / 2 )\). 

\section{Numerical tests}\label{sec:numerics}
All computations were done with OpenLB release 1.5 \cite{kummerlander2022olb15} on at most four nodes with two Intel Xeon Platinum 8368 CPUs and four NVIDIA A100-40 GPUs each. 
The experimental order of convergence (EOC) is evaluated with specific choices of \(\Omega\), \(I\), \(\bm{u}\), \(\mu\), and \(\rho_0\) for benchmark tests from \cite{dapelo2021lattice-boltzmann}.
Let \(\Omega = \left(-1, 1\right)^{3}\ni \left( x, y, z \right)^{\mathrm{T}} = \bm{x}\) and \(I = (t_0, t_M)\).
We use SI units with characteristic scales \(l_{\mathrm{c}} = 2 [\mathrm{m}]\) and \(u_{\mathrm{c}} = 2.5 [\mathrm{m}/\mathrm{s}]\) and neglect further notation. 
A relative \(L^{2}\)-error with respect to the analytical solution \(\rho_{\star}\) is averaged in \((t_{0}, t_{M}) = (0, 1.52)\) to measure an overall error 
\(\overline{\mathrm{err}} = ( 1 / M ) \sum_{i=1}^{M} ( \{ \sum_{\bm{x}\in\Omega} [ \rho ( \bm{x}, t_{i}) - \rho_{\star} ( \bm{x}, t_{i} ) ]^{2} \}/ \{ \sum_{\bm{x}\in \Omega}  [ \rho_{\star} ( \bm{x}, t_{i} ) ]^{2} \} )^{1/2}\). 
We compute samples \((N, P\!e) \in \mathfrak{N} \times \mathfrak{P}\), where \(\mathfrak{N} = \{2^{n} \times 25 : n\in \{0,1,\ldots, 5\}\}\) and \(\mathfrak{P} = \{10^{n} : n\in \{2, 3, 4, 5 \}\}\), and thus test a range of grid P{\'e}clet \(P\!e_{\mathrm{g}} = P\!e / N \) and Courant numbers \(C\!o = (u_{\mathrm{c}} \triangle t)/ \triangle x\). 
The results of the computations in \(\mathfrak{N} \times \mathfrak{P}\) under diffusive scaling for the following examples are compiled in Figure~\ref{fig:smoothADEerr}.
\paragraph{Example 1: Smooth initial data}
The IVP \eqref{eq:targetADE} with \(\rho_{0}^{(\mathrm{sm})}\left(\bm{x}\right) = \sin \left( \pi x\right) \sin \left( \pi y\right) \sin \left( \pi z\right)+1\) is analytically solved by  
\begin{align}
\rho_{\star}^{(\mathrm{sm})}\left(\bm{x}, t\right) =  \mathrm{sin} \left( \pi\left[ x - u_{x} t \right]\right) 
			\mathrm{sin} \left( \pi\left[ y - u_{y} t \right]\right) 
			\mathrm{sin} \left( \pi\left[ z - u_{z} t \right]\right) 
			\mathrm{exp} \left( - 3 \mu \pi^{2}t \right) + 1.
\end{align}
\paragraph{Example 2: Non-smooth initial data}
We initialize \eqref{eq:targetADE} with a superposition of Gaussian hills along the \(x\)-axis.  
To realize a non-differentiability which persists only for \(P\!e_{g} \nearrow \infty\), we set an initial in-domain peak
\begin{align}
\rho_{0}^{(\mathrm{ns})} \left(\bm{x} \right) = \begin{cases} 
\frac{1}{\sqrt{4\pi \mu \triangle t}} + 1, & \quad \text{if } x \in \left(-\frac{\triangle x }{ 2} , \frac{\triangle x}{2} \right), \\
1 , & \quad \text{otherwise}. 
\end{cases} 
\end{align}
The analytical solution is formed through diffusion transport of a Dirac comb 
\begin{align}\label{eq:nonSmoothNonPer}
\rho^{(\mathrm{ns})}_{\star} \left(\bm{x}, t \right) = 
\frac{1}{\sqrt{4\pi \mu t}} \sum\limits_{k\in\mathbb{Z}} \mathrm{exp}\left( - \frac{\left[x - \left(x_{0} + u_{x} t\right) + 2k\right]^{2}}{4 \mu t} \right) + 1, 
\quad
\lim_{t\searrow 0} \rho^{(\mathrm{ns})}_{\star} \left(\bm{x}, t\right) 
 =  \Sha_{2}\left(x-x_{0}\right) + 1 , 
\end{align}
where \(x_{0}\) denotes the \(x\)-location of the peak at \(t=0\). 
Further details and a proof of the limit \(t\searrow 0\) are given in \cite{dapelo2021lattice-boltzmann}. 

{\color{black}
\begin{figure}[ht!]
	\centerline{
		\subfloat[\(P\!e=10^{2}\)]{			
		    \includegraphics[scale=1]{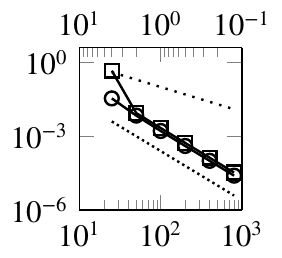}
			\label{sfig:errorSmoothPe100}
		}
		\subfloat[\(P\!e=10^{3}\)]{
		    \includegraphics[scale=1]{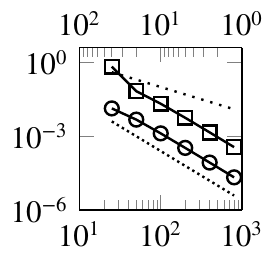}
			\label{sfig:errorSmoothPe1000}
		}
		\subfloat[\(P\!e=10^{4}\)]{
		    \includegraphics[scale=1]{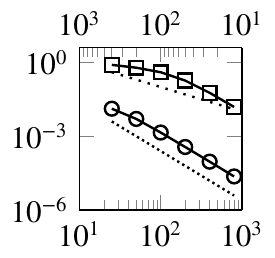}
			\label{sfig:errorSmoothPe10000}
		}
		\subfloat[\(P\!e=10^{5}\)]{
		    \includegraphics[scale=1]{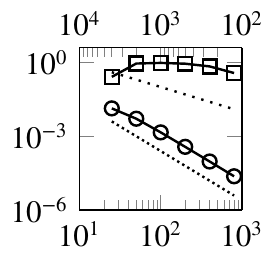}
			\label{sfig:errorSmoothPe100000}
		}
		\subfloat{
		    \includegraphics[scale=1]{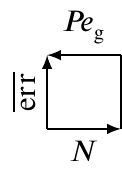}
			\hspace{0.5em}
		    \includegraphics[scale=1]{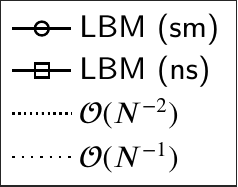}
		}
	}
	\caption{Errors of \(D3Q7\) SRT LBM approximating \eqref{eq:targetADE} with smooth (sm) and non-smooth (ns) initial data.}
	\label{fig:smoothADEerr}
\end{figure}
}
The spatio-temporal grid size is \(\triangle t = \triangle x^{2} =  ( l_{\mathrm{c}} / N )^{2}\) with convection speeds of \(\bm{u}^{(\mathrm{sm})} = u_{\mathrm{c}} \mathbf{1}_{d\times 1}\) and \(\bm{u}^{(\mathrm{ns})} = u_{\mathrm{c}} \bm{e}_{x}\) and the corresponding relaxation time \(\tau\). 
The Courant number sequences over \(C\!o = 0.5^{n} \times 0.2\) with \(n \in \{0,1,\ldots,4\}\), while several magnitudes of grid P{\'e}clet numbers, \(P\!e_{\mathrm{g}} \in [\mathcal{O} (10^{-1} ), \mathcal{O} (10^{3} ) ]\) are swept. 
Whereas Figure~\ref{fig:smoothADEerr} approves the EOC of two for the smooth IVP, a reduction from second to first order is clearly visible for non-smooth initialization at \(P\!e_{\mathrm{g}} \gtrsim 10^{2}\).
Increasing \(P\!e_{\mathrm{g}} \nearrow 10^{4}\) in the latter case, induces a larger error contribution breaking also the EOC of one, which agrees to previous results with \(D3Q19\) \cite{dapelo2021lattice-boltzmann}. 
Since, for further increase of \(P\!e_{\mathrm{g}}\), the non-smooth initialization exits the function space in Lemma~\ref{prop:convergence_RS}, a blowup is expected due to delayed smoothing.

\section{Conclusion}\label{sec:conclusion}
A novel procedure to construct an RS for a given \(d\)-dimensional ADE is established. 
Subsequently, the RS is linked to a \(DdQ(2d+1)\) DVBM on the zeroth and first energy shell. 
With that, we extend the top-down design of LBM \cite{simonis2020relaxation} to \(d\) dimensions. 
Additionally, the necessary LBM ingredients represented by the moment space, the collision scheme, and the equilibrium, are algebraically characterized at the relaxation level. 
A closed equation with general scaling for the RS unfolds the approximation order of the relaxation terms. 
The DVBM is proven to converge to the solution of the target IVP.
The second order discretization of the DVBM leads to an LBM of spatio-temporal order two.  
We provide numerical tests of the top-down constructed LBM for smooth and non-smooth initial data in \(d=3\) dimensions via computing over several ranges of grid-normalized non-dimensional numbers. 
The numerical results indicate that the second order convergence in space for smooth initial data reduces to first order and eventually breaks, when sharpening the initial peak towards a non-smooth delta function. 
Future studies should include solutions to this observation via dynamic MRT stabilization \cite{simonis2021linear,simonis2022temporal}, or entropy control \cite{karlin2014gibbs} of artificial relaxation parameters. 

\section*{Acknowledgement}
This work was performed on the HoreKa supercomputer funded by the Ministry of Science, Research and the Arts Baden-W{\"u}rttemberg and by the Federal Ministry of Education and Research.

\printcredits


\end{document}